\newtheorem{tw}{Theorem}[section]
\newtheorem{pro}[tw]{Proposition}
\newtheorem{cor}[tw]{Corollary}
\theoremstyle{definition}
\newtheorem{exa}[tw]{Example}
\newtheorem{rem}[tw]{Remark}
\begin{document}

\begin{center}
{\Large Cram\'er transform and {\bf t}-entropy}
\end{center}
\begin{center}
{\sc Urszula Ostaszewska,\quad Krzysztof Zajkowski}
\footnote{The authors are supported by the Polish National Science Center, Grant no. DEC-2011/01/B/ST1/03838}\\
Institute of Mathematics, University of Bialystok \\ 
Akademicka 2, 15-267 Bialystok, Poland \\ 
uostasze@math.uwb.edu.pl \\ 
kryza@math.uwb.edu.pl 
\end{center}

\begin{abstract}
{\bf t}-entropy is the convex conjugate of the logarithm of the spectral radius of a weighted composition operator (WCO). Let $X$ be a nonnegative random variable.
We show how the Cram\'er transform with respect to the spectral radius  of WCO  is expressed by the {\bf t}-entropy and the Cram\'er transform of the given random variable $X$.
\end{abstract}

{\it 2010 Mathematics Subject Classification:}  44A15,  47A10, 47B37,  60F99

{\it Key words: Cram\'er transform, relative entropy, Legendre-Fenchel transform, weighted composition operators, spectral radius, Banach lattices}

\section{Introduction}
Let $M_X$ denote the moment-generating function of a given random variable $X$ that is $M_X(t)=Ee^{tX}$.
A random variable $X$ satisfies the {\it Cram\'er condition} if there exists $c>0$ such that $Ee^{c|X|}<\infty$. If a random variable $X$ satisfies the Cram\'er condition with a constant $c>0$ then $M_X$ is well defined (it takes finite values) on a connected neighborhood, containing the interval
$[-c,c]$, of zero and moreover possesses the following expansion 
$$
M_X(t)= \sum_{n=0}^{\infty}\frac{EX^n}{n!}t^n \ \ \ \textrm{for}\ \ |t|< t_0,
$$
where $t_0\geq c$, compare \cite{Bill}.

The {\it Cram\'er transform} of a random variable $X$ satisfying the Cram\'er condition is the Legendre-Fenchel transform of 
the cumulant generating function of $X$, i.e.
$$
(\ln M_X)^\ast(a)=\sup_{t\in\mathbb{R}}\{at-\ln M_X(t)\}.
$$
It was proved in \cite{DiV} that the following contraction principle holds
\begin{equation}
\label{relE}
(\ln M_X)^\ast(a)=\inf_{m\ll \mu_X,\;\int x dm=a}D(m\Vert \mu_X), 
\end{equation}
where $D(m\Vert \mu_X)=\int\ln\frac{dm}{d\mu_X}dm$ is the {\it relative entropy} of a probability distribution $m$ with respect to the distribution $\mu_X$ of $X$.

Recall now the general notion of the Legendre-Fenchel transform.
Let $f$ be a functional on a real locally convex Hausdorff space $L$ with the values in the extended system of real numbers $\bar{\Bbb R}=[-\infty,+\infty]$.
The set $\mathcal{D}(f)=\{\varphi\in L:\;f(\varphi)<+\infty\}$ is called   the {\it effective domain} of the functional $f$.
The functional $f^\ast:L^\ast\mapsto \bar{\Bbb R}$ that is defined on the dual space by the equality
$$
f^\ast(\mu)=\sup_{\varphi\in L}\{\left\langle\mu,\varphi\right\rangle-f(\varphi)\}=\sup_{\varphi\in \mathcal{D}(f)}\{\left\langle\mu,\varphi\right\rangle-f(\varphi)\}\;\;\;\;\;(\mu\in L^\ast)
$$
is called   the {\it Legendre-Fenchel transform} of the functional $f$ (or the {\it convex conjugate} of $f$).
For a functional $g$ on the dual space $L^\ast$ the Legendre-Fechel transform is defined as the functional
on the initial space given by the similar formula: 
$$
g^{\ast}(\varphi)=\sup_{\mu\in L^\ast}\{\left\langle\mu,\varphi\right\rangle-g(\mu)\}=\sup_{\mu\in \mathcal{D}(g)}\{\left\langle\mu,\varphi\right\rangle-g(\mu)\}\;\;\;\;\;(\varphi\in L).
$$
Let us emphasize that the dual functional $f^\ast$ is convex and lower semicontinuous with respect to the weak-$\ast$ topology on the dual space. 
Moreover, if $f:L\mapsto (-\infty,+\infty]$ is  convex and lower semicontinuous then $(f^\ast)^\ast=f$
(the Legendre-Fenchel transform is involutory).

Now we present a general result obtained
for the spectral radius of weighted composition operators.
Let $\mathcal{X}$ be a  Hausdorff compact space with Borel measure $\mu$, $\alpha:\mathcal{X}\mapsto \mathcal{X}$ a continuous mapping preserving
$\mu$ (i.e. $\mu\circ\alpha^{-1}=\mu$) and $g$ be a continuous function on $\mathcal{X}$.
Antonevich, Bakhtin and Lebedev constructed a functional $\tau_\alpha$ depending upon $\mu$, called ${\bf t}$-entropy (see \cite{ABL2,ABL4}), on the set of probability
and $\alpha$-invariant measures
$\mathcal{M}^1_\alpha$ with values in $[0,+\infty]$ such that for the spectral radius of the weighted composition operator $
(gC_\alpha)u(x)=g(x)u(\alpha(x))$ 
acting in spaces $L^p(\mathcal{X},\mu),\ 1 \leq p < \infty$, the following variational principle holds
\begin{equation}
\label{form2}
\ln r(gC_\alpha)=\max_{\nu\in \mathcal{M}^1_\alpha}
\Big\{\int_\mathcal{X}\ln|g|d\nu-\frac{\tau_\alpha(\nu)}{p}\Big\}.
\end{equation}
It turned out that $\tau_\alpha$ is nonnegative (not necessary taking only finite values), convex and lower semicontinuous on $\mathcal{M}^1_\alpha$.

For  $\varphi \in C(\mathcal{X})$ let  $\lambda(\varphi)=\ln r( e^{\varphi}C_\alpha)$. The functional $\lambda$ is convex and continuous
on $C(\mathcal{X})$ and 
the formula (\ref{form2}) states
that $\lambda$ is the Legendre-Fenchel transform of the function $\frac{\tau_\alpha}{p}$, i.e.
\begin{equation}
\label{vp}
\lambda(\varphi)=\max_{\nu\in \mathcal{M}^1_\alpha}
\Big\{\int_\mathcal{X}\varphi d\nu-\lambda^\ast(\nu)\Big\},
\end{equation}
where
$$
\lambda^{\ast}(\nu)=\left\{ \begin{array}{lcl}
\frac{\tau_\alpha(\nu)}{p} & {\rm for} & \nu\in \mathcal{M}_\alpha^1\;\; {\rm and}\;\;\tau_\alpha(\nu)<+\infty, \\[8pt]   
+\infty  &  {\rm otherwise.}&  
\end{array} \right.\\
$$
It means that the effective domain $\mathcal{D}(\lambda^{\ast})$ is contained in $\mathcal{M}^1_{\alpha}$. 

It turned out that considerations on the spectral exponent (the logarithm of the spectral radius) of some functions of WCO in the natural way lead us   to investigate expressions which are similar to
the cumulant generating functions of random variables (see \cite{OZ3}). Thus it appeared the natural idea to define operators which are moment generating
functions of WCO and next to investigate their spectral exponent using tools related with the Cram\'er transform of given random variables.

This treatment brings together questions which deal with investigations of the spectral radius of some operators and forms of the Cram\'er transform of random variables.

\section{Spectral radius of moment-generating functions of WCO}

A weighted composition operator $e^\varphi C_\alpha$, considered in $L^p$-spaces
(Banach lattices), is an example of positive operators. The spectral radius of any positive operator $A$ belongs to its spectrum (see Prop. 4.1 in Ch. V of \cite{Sch}), i.e. $r(A) \in \sigma(A)$.
Recall that if  $r(A)$ is less than the convergence radius of some analytic function $f$ then one can consider  operators that can be written as   analytic functions of given operators. If the coefficients of $f$ are nonnegative then the composition $f(A)$, for any positive operator $A$, is positive and one has $r(f(A)) \in \sigma(f(A))$. In the following Proposition it is shown that $r(f(A)) = f(r(A))$.

\begin{pro}\label{rfA}
Let $A$ be a positive operator acting in a Banach lattice. Then for any analytic function $f$, with nonnegative coefficients,  such that   
its convergence radius is greater than the spectral radius of $A$ the following holds
$$
r(f(A)) = f(r(A)).
$$
\end{pro}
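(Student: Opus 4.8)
The plan is to combine the holomorphic functional calculus (the spectral mapping theorem for analytic functions) with the positivity of $A$, the latter entering only through the already quoted fact that $r(A)\in\sigma(A)$. Write $f(z)=\sum_{n\ge 0}c_nz^n$ with $c_n\ge 0$ and radius of convergence $R>r(A)$. Since $\sigma(A)$ is contained in the closed disc $\{z:|z|\le r(A)\}$, which is compactly contained in the disc of convergence, $f$ is holomorphic on a neighbourhood of $\sigma(A)$; moreover the series $\sum_n c_nA^n$ converges in operator norm, because $\limsup_n\|A^n\|^{1/n}=r(A)<R$, and it represents the element $f(A)$ produced by the holomorphic functional calculus. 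At this point I would invoke the spectral mapping theorem, $\sigma(f(A))=f(\sigma(A))=\{f(\lambda):\lambda\in\sigma(A)\}$.

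Next I would prove the inequality $r(f(A))\le f(r(A))$. From the spectral mapping theorem, $r(f(A))=\max\{|f(\lambda)|:\lambda\in\sigma(A)\}$. For any $\lambda\in\sigma(A)$ we have $|\lambda|\le r(A)<R$, so using $c_n\ge 0$ one gets $|f(\lambda)|\le\sum_n c_n|\lambda|^n\le\sum_n c_n\,r(A)^n=f(r(A))$; taking the maximum over $\lambda\in\sigma(A)$ gives the claim. The nonnegativity of the coefficients is used precisely here, to secure $|f(\lambda)|\le f(|\lambda|)$ together with the monotonicity of $f$ on $[0,R)$.

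For the reverse inequality I would use positivity: by the cited result of Schaefer, $r(A)\in\sigma(A)$, and applying the spectral mapping theorem once more yields $f(r(A))\in\sigma(f(A))$. Since $f(r(A))\ge 0$, it coincides with its own modulus, and every point of $\sigma(f(A))$ has modulus at most $r(f(A))$; hence $f(r(A))\le r(f(A))$. Combined with the previous step, this gives $r(f(A))=f(r(A))$. The only delicate point is the routine bookkeeping of identifying the norm-convergent power series $\sum_n c_nA^n$ with the functional-calculus element $f(A)$ and checking that the hypothesis $R>r(A)$ really places $\sigma(A)$ inside the domain of holomorphy; once that is in place the argument is entirely standard, and I do not anticipate any substantial obstacle.
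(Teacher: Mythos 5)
Your proof is correct and follows essentially the same route as the paper's: the spectral mapping theorem plus $r(A)\in\sigma(A)$ gives $f(r(A))\le r(f(A))$, and the nonnegativity of the coefficients gives $|f(\lambda)|\le f(|\lambda|)\le f(r(A))$ for every $\lambda\in\sigma(A)$, hence the reverse inequality. The only cosmetic difference is that you bound $r(f(A))$ via the supremum characterization of the spectral radius, whereas the paper substitutes the specific point $\omega=r(f(A))\in\sigma(f(A))$; these are the same argument.
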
 
\begin{proof}
If the spectrum $\sigma(A)$ of an operator $A$ is contained in the disc of convergence of an analytic function $f$ then one can correctly define
 the operator $f(A)$ and moreover  by the spectral mapping theorem (see for instance \cite{Yos}) we have
\begin{equation}
\label{specan}
\sigma(f(A))=f(\sigma(A)).
\end{equation} 
Since $r(A)\in\sigma(A)$, $f(r(A))\in f(\sigma(A))=\sigma(f(A))$.
Thus we obtain the following inequality
$$
f(r(A)) \leq r(f(A)).
$$  

To obtain the converse one let us consider an arbitrary element $\omega \in \sigma(f(A))$. By (\ref{specan}) there exists $\lambda \in \sigma(A)$ such that $\omega = f(\lambda)$. Obviously $|\lambda | \leq r(A)$ and under the assumption on nonnegativity of coefficients of $f$  we obtain that $f(|\lambda |) \leq f(r(A))$ and consequently
$$
|\omega|\leq f(|\lambda |) \leq f(r(A)).
$$
Recall that $r(f(A)) \in \sigma(f(A))$ and substituting in the above $\omega = r(f(A))$ we have
$$
r(f(A)) \leq f(r(A)).
$$ 
\end{proof}

For a weighted composition operators $e^\varphi C_\alpha$, if $r(e^\varphi C_\alpha)$ is less than the radius of convergence  of $M_X(t)= \sum_{n=0}^{\infty}\frac{EX^n}{n!}t^n$ then one can correctly define an operator 
\begin{equation}
\label{MxA}
M_X(e^\varphi C_\alpha)=\sum_{n=0}^{\infty}\frac{EX^n}{n!}(e^\varphi C_\alpha)^n.
\end{equation}
Assuming $X\ge0$ we have that $EX^n\ge 0$ and by Proposition \ref{rfA} we obtain
$$
r(M_X(e^\varphi C_\alpha))=M_X(r(e^\varphi C_\alpha))=M_X(e^{\lambda(\varphi)}).
$$
Define now a functional
\begin{equation}\label{lambdatilde}
\widetilde{\lambda}_X(\varphi)=
\left\{ \begin{array}{lcl}
(\ln M_X\circ\exp)(\lambda(\varphi)) & {\rm if} & \varphi\in \lambda^{-1}(\mathcal{D}(\ln M_X\circ\exp)), \\[8pt]   
+\infty \qquad \qquad  \qquad  \qquad      & {\rm if\ not}. &  
\end{array} \right.\\
\end{equation}
 Let us emphasize that because $\mathcal{D}(\ln M_X\circ\exp)$ is some left half line or even whole ${\Bbb R}$ and $\lambda$ is a convex functional on $C(\mathcal{X})$ then $\lambda^{-1}(\mathcal{D}(\ln M_X\circ\exp))$ is a convex subset of $C(\mathcal{X})$.
For a nonnegative  random variable $X$ satisfying the Cram\'er condition the cumulant function $\ln M_X$ is convex lower semicontinuous and 
increasing on ${\Bbb R}$. Therefore the composition $\ln M_X\circ\exp$ is convex, lower semicontinuous and also increasing. 
Let us recall that the functional $\lambda$  is convex and continuous on $C(\mathcal{X})$. Then the functional $\widetilde{\lambda}_X$ as a  composition of $\ln M_X \circ \exp$ and $\lambda$ is also convex and lower semicontinuous on $C(\mathcal{X})$. 

Before in Theorem \ref{MTh} we present a form of the convex conjugate of $\widetilde{\lambda}_X$ first we prove Proposition which allow us
characterize the convex conjugate of the composition of some convex functions with the exponent function.

We start with some observations. If $f$ is convex and increasing function then its effective domain $\mathcal{D}(f)$  is some left half line or whole 
$\mathbb{R}$, moreover $\mathcal{D}(f^\ast)\subset [0,+\infty)$.
\begin{pro}
Let $f$ be a convex, increasing and lower semicontinuous function on $\mathbb{R}$ such that $\mathcal{D}(f)$ is some neighborhood of zero.
Then
\begin{equation}
\label{cos}
(f\circ\exp)^\ast(a)=\min_{\alpha\ge 0}\{f^\ast(\alpha)-a\ln \alpha\}+(\exp)^\ast(a)
\end{equation}
for $a\in\mathcal{D}((f\circ\exp)^\ast)$.
\end{pro}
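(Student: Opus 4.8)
The plan is to prove the identity by establishing the two inequalities separately: the forward one by the Fenchel--Young inequality, and the reverse one, together with the attainment of the infimum, by exhibiting an explicit minimiser.

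First I would rewrite the left-hand side. Since $t\mapsto e^t$ maps $\mathbb R$ bijectively onto $(0,+\infty)$, the substitution $s=e^t$ gives
\[
(f\circ\exp)^\ast(a)=\sup_{t\in\mathbb R}\{at-f(e^t)\}=\sup_{s>0}\{a\ln s-f(s)\}.
\]
I would also record the elementary fact that $(\exp)^\ast(a)=a\ln a-a$ for $a\ge0$ (with the convention $0\ln0:=0$) and $(\exp)^\ast(a)=+\infty$ for $a<0$. Since $f\circ\exp$ is increasing, $\mathcal D\bigl((f\circ\exp)^\ast\bigr)\subset[0,+\infty)$, so one may assume $a\ge0$; moreover $\mathcal D(f^\ast)\subset[0,+\infty)$ and, $f$ being convex, lower semicontinuous and proper, $f=f^{\ast\ast}$. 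For the inequality ``$\le$'', Fenchel--Young gives $f(s)\ge\alpha s-f^\ast(\alpha)$, so for every $\alpha>0$ and $s>0$
\[
a\ln s-f(s)\le(a\ln s-\alpha s)+f^\ast(\alpha)\le\bigl(a\ln(a/\alpha)-a\bigr)+f^\ast(\alpha),
\]
the last step being the elementary maximisation of $u\mapsto a\ln u-\alpha u$ over $u>0$. Taking the supremum over $s>0$ and then the infimum over $\alpha>0$, and rewriting $a\ln(a/\alpha)-a=(\exp)^\ast(a)-a\ln\alpha$, yields $(f\circ\exp)^\ast(a)\le\inf_{\alpha\ge0}\{f^\ast(\alpha)-a\ln\alpha\}+(\exp)^\ast(a)$; the value $\alpha=0$ is harmless, contributing $+\infty$ when $a>0$, and the case $a=0$ is disposed of at the end.

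The reverse inequality and the attainment are the heart of the matter. Assume $a>0$. The function $\Phi(s)=a\ln s-f(s)$ is concave and upper semicontinuous on $(0,+\infty)$, tends to $-\infty$ as $s\to0^+$ (since $f$ is bounded near $0$) and as $s$ leaves $\mathcal D(f)$, hence attains its supremum at some $s^\ast>0$ with $f(s^\ast)<+\infty$. Optimality of $s^\ast$ forces $a/s^\ast\in\partial f(s^\ast)$: at an interior maximiser $0\in\partial\Phi(s^\ast)=\{a/s^\ast\}-\partial f(s^\ast)$, while if $s^\ast$ is the right endpoint of $\mathcal D(f)$ the one-sided optimality still gives $a/s^\ast\ge f'_-(s^\ast)$, i.e.\ $a/s^\ast\in\partial f(s^\ast)$. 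Putting $\alpha^\ast=a/s^\ast>0$, the Fenchel equality $f(s^\ast)+f^\ast(\alpha^\ast)=\alpha^\ast s^\ast=a$ gives $f^\ast(\alpha^\ast)=a-f(s^\ast)$, whence
\[
f^\ast(\alpha^\ast)-a\ln\alpha^\ast+(\exp)^\ast(a)=\bigl(a-f(s^\ast)\bigr)-a\ln(a/s^\ast)+\bigl(a\ln a-a\bigr)=a\ln s^\ast-f(s^\ast)=(f\circ\exp)^\ast(a).
\]
So the infimum is attained at $\alpha^\ast$ and equals $(f\circ\exp)^\ast(a)-(\exp)^\ast(a)$, which is the claimed identity with ``$\inf$'' replaced by ``$\min$''. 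Finally, for $a=0$ one checks directly that $(f\circ\exp)^\ast(0)=-\inf_{s>0}f(s)=-f(0)$ by monotonicity and continuity of $f$ at $0$, while $\min_{\alpha\ge0}f^\ast(\alpha)=-f^{\ast\ast}(0)=-f(0)$, attained at any $\alpha\in\partial f(0)$, and $(\exp)^\ast(0)=0$.

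I expect the main obstacle to be the analysis of $\Phi$ near the endpoints of $\mathcal D(f)\cap(0,+\infty)$: one must ensure that the supremum defining $(f\circ\exp)^\ast(a)$ is actually attained and, in the borderline situation where the maximiser sits at a finite right endpoint of $\mathcal D(f)$ at which $f$ is finite, that $a/s^\ast$ is still a subgradient of $f$ there. Once a pair $(s^\ast,\alpha^\ast)$ with $\alpha^\ast\in\partial f(s^\ast)$ and $\alpha^\ast s^\ast=a$ is produced, the identity drops out of the one-line Fenchel-equality computation above.
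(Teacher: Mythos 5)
Your proof is correct, but it takes a genuinely different route from the paper's. The paper obtains (\ref{cos}) in essentially one step from the general formula for the conjugate of a composition of an increasing convex function with a convex function, $(f\circ g)^\ast(a)=\min_{\alpha\ge 0}\{f^\ast(\alpha)+(\alpha g)^\ast(a)\}$ (Theorem 2.5.1 in \cite{HUiL}), and then just evaluates $(\alpha\exp)^\ast(a)=\alpha\exp^\ast(a/\alpha)=(\exp)^\ast(a)-a\ln\alpha$ for $\alpha>0$, noting that $\alpha=0$ contributes $+\infty$ when $a>0$ and that the case $a=0$ reduces to $\min_{\alpha\ge0}f^\ast(\alpha)$. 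You instead reprove this special case of the composition theorem from scratch: Fenchel--Young together with the elementary maximisation of $u\mapsto a\ln u-\alpha u$ gives ``$\le$'', and the construction of a pair $(s^\ast,\alpha^\ast)$ with $\alpha^\ast=a/s^\ast\in\partial f(s^\ast)$ plus the Fenchel equality gives ``$\ge$'' and the attainment of the minimum. What your version buys is self-containedness and an explicit minimiser ($\alpha^\ast=a/s^\ast$, where $s^\ast$ maximises $a\ln s-f(s)$); what it costs is the endpoint analysis needed to guarantee that $s^\ast$ exists, which the cited theorem hides. One small point there: you argue that $\Phi(s)=a\ln s-f(s)\to-\infty$ as $s\to0^+$ and ``as $s$ leaves $\mathcal{D}(f)$'', but when $\mathcal{D}(f)=\mathbb{R}$ you should add that a non-constant increasing convex $f$ eventually majorises an affine function of positive slope, so $\Phi(s)\to-\infty$ as $s\to+\infty$ as well (if $f$ is constant then $a>0$ forces $a\notin\mathcal{D}((f\circ\exp)^\ast)$, and the case $a=0$ you treat separately). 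With that line added the argument is complete and equivalent in conclusion to the paper's.
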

\begin{proof}
Recall that the convex conjugate of the exponent function $\exp^{\ast}(c)$ takes the value $c\ln c - c$ if $c>0$, $\exp^{\ast}(0)=0$ and $\exp^{\ast}(c)= +\infty$ if $c<0$. Because the effective domain of $\exp$ is whole ${\Bbb R}$ then its support function 
$\sigma_{\mathcal{D}(\exp)}(a)= +\infty$
for $a\neq 0$ and $\sigma_{\mathcal{D}(\exp)}(0)=0$.

Observe now that $f\circ\exp$ is convex, increasing and lower semicontinuous. 
For this reason $\mathcal{D}((f\circ\exp)^\ast)\subset [0,+\infty)$.
Using the formula on the convex conjugate of  composite functions 
(see Th. 2.5.1 in \cite{HUiL}), for $a\in\mathcal{D}((f\circ\exp)^\ast)$, we get
\begin{equation}
\label{mini}
(f\circ\exp)^\ast(a)=\min_{\alpha\ge 0}\{f^\ast(\alpha)+(\alpha\exp)^\ast(a)\}.
\end{equation}
Assume first that $a$ is a positive number belonging to $\mathcal{D}((f\circ\exp)^\ast)$. 
If $\alpha =0$ then $(0\exp)^\ast(a)=\sigma_{\mathcal{D}(\exp)}(a)= +\infty$.
It follows that we can search the above minimum for $\alpha >0$. But when $\alpha>0$ then $(\alpha\exp)^\ast(a)= \alpha\exp^\ast (\frac{a}{\alpha}) $. Substituting the formula on $\exp^\ast$ into (\ref{mini}) , for $a > 0$, we obtain 
\begin{equation}
\label{mini1}
(f\circ\exp)^\ast(a) = \min_{\alpha\ge 0}\{f^\ast(\alpha)-a\ln \alpha\}+(\exp)^\ast(a)\\
 \end{equation}

Consider now the possible case when $0\in \mathcal{D}((f\circ\exp)^\ast)$. Notice that then for each $\alpha\ge 0$ $(\alpha\exp)^\ast(0)=0$
and the formula (\ref{mini}) take the form
$$
(f\circ\exp)^\ast(0)=\min_{\alpha\ge 0}f^\ast(\alpha)
$$
that coincides with (\ref{cos}) for $a=0$. On the end let us emphasize that if it is known that $f^\ast$ attains its minimum at a positive number then
we can search the minimum in (\ref{mini1}) for $\alpha > 0$. 
\end{proof}

Observe that if $X$ is a nonnegative and not identically zero (a.e.) random variable  satisfying the Cram\'er condition then its cumulant generating function $\ln M_X$ is convex, increasing and lower semicontinuous. Note that $\ln M_X(0)=0$. Moreover $(\ln M_X)^{\ast}$ attains its minimum at $a=EX$ equals zero. Thus  for the cumulant generating function we can formulate the following 

\begin{cor}
\label{stw}
Let $X$ be a nonnegative and not identically zero (a.e.) random variable  satisfying the Cram\'er condition.
The convex conjugate of the composite function $\ln M_X\circ\exp$ can be expressed by the Cram\'er transform of $X$ as follows
\begin{equation}
\label{form}
(\ln M_X\circ\exp)^\ast(a)=\min_{\alpha> 0}\{(\ln M_X)^\ast(\alpha)-a\ln \alpha\}+(\exp)^\ast(a)
\end{equation}
for $a\in\mathcal{D}((\ln M_X \circ\exp)^\ast)$. Moreover $0\in \mathcal{D}((\ln M_X \circ\exp)^\ast)$ and $(\ln M_X\circ\exp)^\ast(0)=0$. 
\end{cor}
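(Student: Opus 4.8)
The plan is to obtain Corollary~\ref{stw} as an essentially immediate consequence of the preceding Proposition applied to the specific function $f=\ln M_X$. First I would verify that $f=\ln M_X$ satisfies all the hypotheses of the Proposition: convexity, monotonicity and lower semicontinuity of $\ln M_X$ were already recorded in the text for a nonnegative random variable satisfying the Cram\'er condition, and the Cram\'er condition guarantees that $M_X$ is finite on a connected neighborhood of $0$ containing $[-c,c]$, so $\mathcal{D}(\ln M_X)$ is indeed a neighborhood of zero. Hence the Proposition applies and yields, for $a\in\mathcal{D}((\ln M_X\circ\exp)^\ast)$,
\begin{equation*}
(\ln M_X\circ\exp)^\ast(a)=\min_{\alpha\ge 0}\{(\ln M_X)^\ast(\alpha)-a\ln\alpha\}+(\exp)^\ast(a).
\end{equation*}

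Next I would upgrade the range of the minimum from $\alpha\ge 0$ to $\alpha>0$. The relevant fact, noted in the last sentence of the Proposition's proof, is that once $f^\ast$ is known to attain its minimum at a strictly positive point, one may restrict the minimisation in the analogue of \eqref{mini1} to $\alpha>0$. So I would invoke the observation already made in the paragraph preceding the corollary: since $X$ is nonnegative and not identically zero, $(\ln M_X)^\ast$ attains its minimum value $0$ precisely at $a=EX$, and $EX>0$ because $X\ge 0$ is not a.e.\ zero. Therefore the minimum over $\alpha\ge 0$ is the same as the minimum over $\alpha>0$, giving \eqref{form}.

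Finally I would establish the two supplementary claims. Because $\ln M_X(0)=0$ and $\exp$ maps a neighbourhood of $0\in\mathbb{R}$ into a neighbourhood of $1$ on which $M_X$ is finite, the composite $\ln M_X\circ\exp$ is finite near $0$; hence its convex conjugate is bounded below, and in particular $(\ln M_X\circ\exp)^\ast(a)\ge 0\cdot a-(\ln M_X\circ\exp)(0)=0$ for all $a$, so the conjugate is nonnegative. To see that $0$ lies in its effective domain and the value there is $0$, I would use the $a=0$ case of the Proposition, $(\ln M_X\circ\exp)^\ast(0)=\min_{\alpha\ge 0}(\ln M_X)^\ast(\alpha)$, together with $\min_\alpha(\ln M_X)^\ast(\alpha)=0$ (attained at $\alpha=EX$); this is finite, so $0\in\mathcal{D}((\ln M_X\circ\exp)^\ast)$ and the value is $0$.

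The only mild subtlety—hardly an obstacle—is bookkeeping about effective domains: one must be sure that the hypothesis ``$\mathcal{D}(f)$ is a neighbourhood of zero'' in the Proposition is genuinely met (which is exactly the content of the Cram\'er condition) and that the monotonicity/lower-semicontinuity of $\ln M_X$ are legitimate. All of this has already been spelled out in the preceding discussion, so the corollary is a direct specialization and the proof is short.
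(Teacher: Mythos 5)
Your proof is correct and follows essentially the same route as the paper: the corollary is obtained by specializing the preceding Proposition to $f=\ln M_X$ (whose hypotheses are guaranteed by the Cram\'er condition) and using the observation that $(\ln M_X)^\ast$ attains its minimum $0$ at $EX>0$, both to restrict the minimization to $\alpha>0$ and to evaluate the $a=0$ case as $\min_{\alpha\ge 0}(\ln M_X)^\ast(\alpha)=0$. The only slip is the inessential side remark that the conjugate is nonnegative because $-(\ln M_X\circ\exp)(0)=0$: in fact $(\ln M_X\circ\exp)(0)=\ln M_X(1)$, which need not vanish or even be finite under the Cram\'er condition, but this remark is not needed for any of the stated claims.
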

\begin{tw}
\label{MTh}
The convex conjugate of the functional $\widetilde{\lambda}_X$ defined by (\ref{lambdatilde}) is of the form 
\begin{equation}
\label{conj}
\widetilde{\lambda}_X^{\ast}(\widetilde{\nu})=\frac{1}{p}\widetilde{\nu}(\mathcal{X})\tau_\alpha\Big(\frac{\widetilde{\nu}}{\widetilde{\nu}(\mathcal{X})}\Big)+ (\ln M_X\circ\exp)^\ast(\widetilde{\nu}(\mathcal{X})).
\end{equation}
\\ If $\widetilde{\nu}(\mathcal{X}) =0$ then $\widetilde{\lambda}_X^{\ast}({\bf 0})=0$.  And the  effective domain of $\widetilde{\lambda}_X^{\ast}$ is contained in the set 
$ 
\widetilde{\mathcal{M}}=\{\widetilde{\nu}=a\nu : \nu \in \mathcal{M}^1_{\alpha}\ {\textrm and} \ a \in \mathcal{D}((\ln M_X\circ\exp)^{\ast}) \}
$.
\end{tw}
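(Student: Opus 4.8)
The plan is to recognise $\widetilde{\lambda}_X$ as a composition and then apply the conjugate-of-a-composition formula exactly as in the proof of the Proposition preceding Corollary~\ref{stw}, only with the inner function $\exp$ replaced by $\lambda$. Writing $h=\ln M_X\circ\exp$, the discussion preceding the statement shows that $h\colon\mathbb R\to(-\infty,+\infty]$ is convex, increasing and lower semicontinuous (extended by $+\infty$ off $\mathcal D(\ln M_X\circ\exp)$), while (\ref{lambdatilde}) says precisely that $\widetilde{\lambda}_X(\varphi)=h(\lambda(\varphi))$ for every $\varphi\in C(\mathcal X)$. Since $\lambda$ is convex and continuous on $C(\mathcal X)$, the composition formula (Theorem~2.5.1 in \cite{HUiL}) gives
\begin{equation}
\label{cf}
\widetilde{\lambda}_X^{\ast}(\widetilde{\nu})=\min_{s\ge 0}\bigl\{(\ln M_X\circ\exp)^{\ast}(s)+(s\lambda)^{\ast}(\widetilde{\nu})\bigr\},
\end{equation}
where, as in that proof, the $s=0$ term means the support function $\sigma_{\mathcal D(\lambda)}(\widetilde{\nu})$; because $\mathcal D(\lambda)=C(\mathcal X)$ this term equals $0$ for $\widetilde{\nu}={\bf 0}$ and $+\infty$ otherwise, and recall $(\ln M_X\circ\exp)^{\ast}(0)=0$ by Corollary~\ref{stw}.

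Next I would compute $(s\lambda)^{\ast}(\widetilde{\nu})$ for $s>0$. The elementary scaling rule $(sf)^{\ast}(\mu)=sf^{\ast}(\mu/s)$ gives $(s\lambda)^{\ast}(\widetilde{\nu})=s\,\lambda^{\ast}(\widetilde{\nu}/s)$, and substituting the explicit form of $\lambda^{\ast}$ recorded after~(\ref{vp}) turns this into $\frac{s}{p}\tau_\alpha(\widetilde{\nu}/s)$ when $\widetilde{\nu}/s\in\mathcal M^1_\alpha$ and into $+\infty$ otherwise. The membership $\widetilde{\nu}/s\in\mathcal M^1_\alpha$ amounts to saying that $\widetilde{\nu}$ is a nonnegative $\alpha$-invariant Borel measure of total mass $s$; in particular it can hold for at most one value of $s$, namely $s=\widetilde{\nu}(\mathcal X)$.

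Then I would run the minimisation in~(\ref{cf}). If $\widetilde{\nu}={\bf 0}$, every $s>0$ makes $(s\lambda)^{\ast}({\bf 0})=+\infty$ (as ${\bf 0}\notin\mathcal M^1_\alpha$), so the minimum is attained at $s=0$ and $\widetilde{\lambda}_X^{\ast}({\bf 0})=(\ln M_X\circ\exp)^{\ast}(0)=0$. If $\widetilde{\nu}\neq{\bf 0}$, the $s=0$ term is $+\infty$ and by the previous paragraph an $s>0$ can contribute a finite value only when $s=\widetilde{\nu}(\mathcal X)$; hence $\widetilde{\lambda}_X^{\ast}(\widetilde{\nu})=+\infty$ unless $a:=\widetilde{\nu}(\mathcal X)>0$ and $\nu:=\widetilde{\nu}/a\in\mathcal M^1_\alpha$, in which case
$$
\widetilde{\lambda}_X^{\ast}(\widetilde{\nu})=(\ln M_X\circ\exp)^{\ast}(a)+\frac{a}{p}\,\tau_\alpha(\nu)=\frac1p\,\widetilde{\nu}(\mathcal X)\,\tau_\alpha\!\Bigl(\frac{\widetilde{\nu}}{\widetilde{\nu}(\mathcal X)}\Bigr)+(\ln M_X\circ\exp)^{\ast}\bigl(\widetilde{\nu}(\mathcal X)\bigr),
$$
which is~(\ref{conj}). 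For the claim on the effective domain: if $\widetilde{\lambda}_X^{\ast}(\widetilde{\nu})<+\infty$ then either $\widetilde{\nu}={\bf 0}=0\cdot\nu$ with $0\in\mathcal D((\ln M_X\circ\exp)^{\ast})$ (Corollary~\ref{stw}), or $\widetilde{\nu}=a\nu$ with $a>0$, $\nu\in\mathcal M^1_\alpha$; since $\tau_\alpha\ge 0$ the first summand of~(\ref{conj}) is nonnegative, so finiteness of the sum forces $(\ln M_X\circ\exp)^{\ast}(a)<+\infty$, i.e. $a\in\mathcal D((\ln M_X\circ\exp)^{\ast})$. In both cases $\widetilde{\nu}\in\widetilde{\mathcal M}$.

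The one genuinely delicate point is the legitimacy of the composition formula~(\ref{cf}) when the inner convex function $\lambda$ is defined on the infinite-dimensional space $C(\mathcal X)$ rather than on $\mathbb R$, together with the correct reading of the degenerate $s=0$ term as $\sigma_{\mathcal D(\lambda)}$; everything else (the scaling identity, the identification of $\mathcal M^1_\alpha$ inside $C(\mathcal X)^{\ast}$, and the case bookkeeping) is routine. If one wishes to avoid using \cite{HUiL} in this generality, one can instead substitute the representation $h(t)=\sup_{s\ge 0}\{st-(\ln M_X\circ\exp)^{\ast}(s)\}$ (valid since $h$ is convex, lower semicontinuous and increasing) into $\widetilde{\lambda}_X^{\ast}(\widetilde{\nu})=\sup_{\varphi}\{\langle\widetilde{\nu},\varphi\rangle-h(\lambda(\varphi))\}$ and exchange the resulting $\sup_{\varphi}\inf_{s\ge 0}$ by a minimax argument (the integrand is convex in $s$, concave in $\varphi$, and $\lambda$ is continuous); the inequality ``$\le$'' is then automatic and ``$\ge$'' follows once the exchange is justified.
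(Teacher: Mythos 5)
Your conclusions are right and your route is genuinely different from the paper's. You compute $\widetilde{\lambda}_X^{\ast}$ \emph{directly}, by applying the conjugate-of-a-composition formula to $h\circ\lambda$ with the inner convex function $\lambda$ living on $C(\mathcal{X})$, then evaluating $(s\lambda)^{\ast}(\widetilde{\nu})=s\lambda^{\ast}(\widetilde{\nu}/s)$ and observing that only $s=\widetilde{\nu}(\mathcal{X})$ can contribute a finite value; your treatment of the $s=0$ term, of the value at ${\bf 0}$, and of the effective domain is correct. The paper never conjugates the composition at all: it substitutes the biconjugate representation (\ref{vpcomp}) of $\ln M_X\circ\exp$ and the variational principle (\ref{vp}) into $\widetilde{\lambda}_X(\varphi)$, which exhibits $\widetilde{\lambda}_X$ as a double supremum, i.e.\ as the Legendre--Fenchel transform of the candidate functional (\ref{suma}) on $\widetilde{\mathcal{M}}_{+}$ (two suprema always commute, so no exchange needs justifying); it then proves that (\ref{suma}) is convex and weak-$\ast$ lower semicontinuous there (this is what the explicit $\tau_\alpha$ convexity computation is for) and concludes that (\ref{suma}) equals $\widetilde{\lambda}_X^{\ast}$ by involutivity of the transform, computing $\widetilde{\lambda}_X^{\ast}({\bf 0})$ separately from the definition.

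The point you flag as delicate is in fact the entire load-bearing step of your argument, and it is left unproved. Theorem~2.5.1 of \cite{HUiL} is a finite-dimensional statement; the paper invokes it only for the scalar composition $\ln M_X\circ\exp$ on $\mathbb{R}$, not for an inner function on $C(\mathcal{X})$. Your fallback --- exchanging $\sup_{\varphi}$ and $\inf_{s\ge0}$ by a minimax theorem --- needs a compactness (or attainment) hypothesis that neither $C(\mathcal{X})$ nor $\mathcal{D}((\ln M_X\circ\exp)^{\ast})\subset[0,+\infty)$ supplies for free; only the inequality ``$\le$'' is automatic, and ``$\ge$'' is exactly what must be established. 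The paper's detour through biconjugation is precisely the device that makes this exchange unnecessary, at the price of having to verify convexity and lower semicontinuity of the candidate --- the part your approach would get for free. To make your direct route complete you must either prove the composition formula in this infinite-dimensional generality or justify the minimax exchange (e.g.\ by showing the infimum over $s$ may be restricted to a compact interval).
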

\begin{proof}
The composition $\ln M_X\circ\exp$ is convex and lower semicontinuous. 
By the involutory of the Legendre-Fenchel transform we get
\begin{equation}
\label{vpcomp}
(\ln M_X\circ\exp)(t)
 =  \sup_{a\in\mathcal{D}((\ln M_X\circ\exp)^{\ast})}\Big\{t a - (\ln M_X\circ\exp)^\ast(a)\Big\}.
\end{equation}
Since for $a=0$ the expression on the right hand side is equal zero the supremum can be search on the set $\mathcal{D}((\ln M_X\circ\exp)^{\ast})\setminus \{0\}$.
 
Substituting $t=\lambda(\varphi)$ into (\ref{vpcomp})  and using the variational principle (\ref{vp}) we get
\begin{eqnarray*}
\widetilde{\lambda}_X(\varphi)
& = & \sup_{a\in\mathcal{D}((\ln M_X\circ\exp)^{\ast})\setminus \{0\}}\Big\{\lambda(\varphi) a - (\ln M_X\circ\exp)^\ast(a)\Big\}\\
\: &=& \sup_{a\in\mathcal{D}((\ln M_X\circ\exp)^{\ast})\setminus \{0\}}\sup_{\nu\in \mathcal{M}^1_\alpha}\Big\{\int_\mathcal{X}\varphi d(a\nu)-a\frac{\tau_\alpha(\nu)}{p}- (\ln M_X\circ\exp)^\ast(a)\Big\}.\\
\end{eqnarray*} 
Denoting $a\nu$ by $\widetilde{\nu}$ we have that $\widetilde{\nu}(\mathcal{X})=a$ and $\nu=\frac{\widetilde{\nu}}{\widetilde{\nu}(\mathcal{X})}$ for $\widetilde{\nu}(\mathcal{X})\neq 0$.
Let us define $\widetilde{\mathcal{M}}_{+}=\{a\nu:\; \nu\in \mathcal{M}^1_\alpha\;{\textrm and}\;a\in\mathcal{D}((\ln M_X\circ\exp)^{\ast})\setminus \{0\}\}$. Note that $\widetilde{\mathcal{M}}_{+}=\widetilde{\mathcal{M}}\setminus \{\bf{0}\}$.
Applying the introduced notations we can rewrite the above as follows
$$
\widetilde{\lambda}_X(\varphi)=\sup_{\widetilde{\nu}\in \widetilde{\mathcal{M}}_{+}}\Big\{\int_\mathcal{X}\varphi d\widetilde{\nu}-\frac{1}{p}\widetilde{\nu}(\mathcal{X})\tau_\alpha\Big(\frac{\widetilde{\nu}}{\widetilde{\nu}(\mathcal{X})}\Big)- (\ln M_X\circ\exp)^\ast(\widetilde{\nu}(\mathcal{X}))\Big\}.
$$
Let us note that the above equation has the form of the Legendre-Fenchel transform. Thus we immediately obtain convexity and lower semicontinuity of the functional $\widetilde{\lambda}_X$ on $C(\mathcal{X})$.

It remains to prove that the expression
\begin{equation}\label{suma}
\frac{1}{p}\widetilde{\nu}(\mathcal{X})\tau_\alpha\Big(\frac{\widetilde{\nu}}{\widetilde{\nu}(\mathcal{X})}\Big)+ (\ln M_X\circ\exp)^\ast(\widetilde{\nu}(\mathcal{X}))
\end{equation}
is convex and lower semicontinuos on $\widetilde{\mathcal{M}}_{+}$.
Notice now that $\widetilde{\mathcal{M}}$ is some subset (convex subset) of $C(\mathcal{X})^\ast$ and $\widetilde{\nu}(\mathcal{X})$ is the total variation of $\widetilde{\nu}$
on $\widetilde{\mathcal{M}}$ that is a norm on $C(\mathcal{X})^\ast$. For this reason the functions $\widetilde{\nu}\mapsto\widetilde{\nu}(\mathcal{X})$ and
$\widetilde{\nu}\mapsto\frac{\widetilde{\nu}}{\widetilde{\nu}(\mathcal{X})}$ are continuous on $\widetilde{\mathcal{M}}_{+}$. 
The ${\bf t}$-entropy and $(\ln M_X\circ \exp)^\ast$ are lower semicontinuous on $\mathcal{M}^1_\alpha$ and $\mathbb{R}$, respectively. Thus 
the expression (\ref{suma}) is lower semicontinuous on $\widetilde{\mathcal{M}}_{+}$. 

Convexity of $(\ln M_X\circ\exp)^\ast$ on $\mathbb{R}$, additivity and positive homogeneity
of the total variation on $\widetilde{\mathcal{M}}$ gives convexity of $(\ln M_X\circ \exp)^\ast(\widetilde{\nu}(\mathcal{X}))$ on $\widetilde{\mathcal{M}}$.
Moreover by convexity of $\tau_{\alpha}$, for $s \in [0,1]$, we get
$$ 
[s\widetilde{\nu}_1(\mathcal{X})+ (1-s)\widetilde{\nu}_2(\mathcal{X})] \tau_{\alpha}\Big( \frac{s\widetilde{\nu}_1+ (1-s)\widetilde{\nu}_2}{s\widetilde{\nu}_1(\mathcal{X})+ (1-s)\widetilde{\nu}_2(\mathcal{X})}\Big)
$$
$$
= [s\widetilde{\nu}_1(\mathcal{X})+ (1-s)\widetilde{\nu}_2(\mathcal{X})]\tau_{\alpha}
\Big(\frac{s\widetilde{\nu}_1(\mathcal{X})}{s\widetilde{\nu}_1(\mathcal{X})+ (1-s)\widetilde{\nu}_2(\mathcal{X})}\cdot\frac{\widetilde{\nu}_1}{\widetilde{\nu}_1(\mathcal{X}) }+ \frac{(1-s)\widetilde{\nu}_2(\mathcal{X})}{s\widetilde{\nu}_1(\mathcal{X})+ (1-s)\widetilde{\nu}_2(\mathcal{X})}\cdot\frac{\widetilde{\nu}_2}{\widetilde{\nu}_2(\mathcal{X})}\Big) 
$$
$$
\leq s\widetilde{\nu}_1(\mathcal{X})\tau_{\alpha}\Big(\frac{\widetilde{\nu}_1}{\widetilde{\nu}_1(\mathcal{X}) }\Big) + (1-s)\widetilde{\nu}_2(\mathcal{X})\tau_{\alpha}\Big(\frac{\widetilde{\nu}_2}{\widetilde{\nu}_2(\mathcal{X}) }\Big).
$$
For this reason the expression (\ref{suma}) is convex and lower semicontinuous on $\widetilde{\mathcal{M}}_{+}$. It means that the formula (\ref{suma}) is equal to $\widetilde{\lambda}^{\ast}$ on this set. 

To calculate the value of $\widetilde{\lambda}^{\ast}$ at $\widetilde{\nu}\equiv {\bf 0}$ we use the Legendre-Fenchel transform, i.e. 
$$ 
\widetilde{\lambda}_X^{\ast}({\bf 0}) =\sup_{\varphi \in \mathcal{D}(\widetilde{\lambda}_X)}\{-(\ln M_X\circ\exp)(\lambda(\varphi))\}=-\inf_{\varphi \in \mathcal{D}(\widetilde{\lambda}_X)}(\ln M_X)(r(e^\varphi T_\alpha)).
$$
The cumulant generating function $\ln M_X$ is continuous at $0$ and its value equals $0$. Because the spectral radius $r(e^\varphi T_\alpha)$ can be an arbitrary
small positive number then we obtain that $\widetilde{\lambda}_X^\ast({\bf 0})=0$.
\end{proof}

\begin{exa}
Let a random variable $X$ be exponentially distributed with a positive parameter $\mu$, i.e. with the density function
$
f(x)=\mu e ^{-\mu x}{\bf 1}_{(0,\infty)}(x).  
$  
Its cumulant generating function is 
$
\ln M_X(t)= \ln\frac{\mu}{\mu - t}
$
for $t<\mu$ and $+\infty$ otherwise. It is a convex and increasing function. The classical Legendre-Fenchel transform gives that 
$$
(\ln M_X)^\ast(a)=a[(\ln M_X)']^{-1}(a)-(\ln M_X)([(\ln M_X)']^{-1}(a)),
$$
where $[(\ln M_X)']^{-1}$ is the inverse function to the derivative of $\ln M_X$. By direct calculations we get that
$$
(\ln M_X)^\ast(a)=\mu a-\ln(\mu a)-1
$$
for $a>0$.
In the same manner we can obtain the formula on
\begin{equation}
\label{comp}
(\ln M_X\circ\exp)^\ast(a)=a\ln(\mu a)-(a+1)\ln(a+1)\quad (a>0).
\end{equation}
We consider the operator of the form $M_X(A)= \mu (\mu I- A)^{-1}$ which is well defined if $r(A)<\mu$. For the weighted composition $e^\varphi C_\alpha$, the set
$\{\varphi\in C(\mathcal{X}):\ r(e^\varphi C_\alpha)<\mu\}$ is the effective domain of the functional $\widetilde{\lambda}_X$.
Substituting the formula on $(\ln M_X\circ\exp)^\ast$ into (\ref{conj}) we get the evident form of the convex conjugate of $\widetilde{\lambda}_X$ for the exponentially distributed random variable.
\end{exa}
\begin{rem}
Using the Legendre-Fenchel transform we can also obtain the formula
$$
((\ln M_X)^\ast\circ\exp)^\ast(a)=(a+1)\ln(a+1)-a\ln\mu -a.
$$
In this case $\mathcal{D}((\ln M_X\circ\exp)^\ast)=[0,\infty)$.
\end{rem}

Recall that if $X$ satisfies the Cram\'er condition with some $c>0$ then for $t\in (-c,c)$ one has
$$
M_X(t)= \sum_{n=0}^{\infty}\frac{EX^n}{n!}t^n
$$ 
and it is known that this power series possesses the convergence radius $R$ not less than $c$. 
Moreover if the moment-generating function $M_X$ of a nonnegative random variable satisfies  additionally  condition $\lim_{t\to R^{-}} M_X(t)=+\infty$ then, by  Theorem 2.5 in  \cite{OZ3},
we obtain the following formula on the convex conjugate of composition $\ln M_X\circ\exp$ depending on the moments of random variable $X$ 
\begin{equation}
\label{form3}
(\ln M_X \circ \exp)^{\ast}(a)=\left\{ 
\begin{array}{ll}
\min\limits_{(t_k)\in S_a} \liminf\limits_{N\to \infty} \sum\limits_{k=0}^{N} t_k\ln \frac{t_k k!}{EX^k} & a>0, \\
0 & a=0,\\
+\infty & a<0,
\end{array} \right.
\end{equation} 
where $S_a=\{ (t_n): t_n\geq 0,\ \sum_{n=0}^{\infty} t_n=1 \ {\rm and} \ \sum_{n=0}^{\infty} n t_n =a\}$. 
Let us emphasize that it is an another formula on the  convex conjugate of the composition $\ln M_X\circ\exp$.

\begin{exa}
The moments of the exponentially distributed random variables $X$  equal
$E X^n = \frac{n!}{\mu^n}$ for any $n$. Notice that the moment-generating function of $X$ satisfies assumptions of Theorem 2.5 in  \cite{OZ3} and for $a>0$, by the formula (\ref{form3}), we obtain
\begin{eqnarray}
\label{momenty}
(\ln M_X \circ \exp)^{\ast}(a)&=& \min\limits_{(t_n)\in S_a} \liminf\limits_{N\to \infty} \sum_{k=0}^{N} (k t_k \ln \mu+t_k\ln t_k) \nonumber \\
&=&\min\limits_{(t_n)\in S_a}\Big\{\Big(\sum_{n=0}^\infty nt_n\Big)\ln  \mu +  \sum_{n=0}^\infty t_n\ln t_n\Big\} \nonumber \\
&=& a\ln  \mu + \min_{(t_n)\in S_a}  \sum_{n=0}^\infty t_n\ln t_n .
\end{eqnarray} 
Let us emphasize that, how it was proved in \cite[Prop. 2.3]{Zaj}, the entropy function of infinite numbers of variables $\sum_{n=0}^\infty t_n\ln t_n$ takes
on the set $\{ (t_n): t_n\geq 0,\ \sum_{n=0}^{\infty} t_n=1 \ {\rm and} \ \sum_{n=0}^{\infty} n t_n <\infty\}$
finite values and therefore the above series are convergent.
Moreover  comparing (\ref{comp})  and  (\ref{momenty}) we get 
$$
a \ln a - (a+1)\ln(a+1)= \min_{(t_n)\in S_a} \sum_{n=0}^\infty t_n\ln t_n .
$$
On the left handside there is the Legendre-Fenchel transform of $\ln M_X \circ \exp$ for the parameter $\mu=1$.

\end{exa}

Consider a discrete random variable $X$ taking values in ${\Bbb N}\cup \{0\}$; $P(X=n)=p_n$.
In this case it appears another opportunity of an application of Theorem 2.5 \cite{OZ3}.
The probability-generating function of $X$ has the form 
$$
g_X(s)= \sum_{n=0}^{\infty} p_n s^n.
$$
Since $g_X(1)=1$, the convergence radius $R$ of $g_X$ is not less than $1$.
Let $A$ be a positive operator with the spectral radius less than $R$ and greater than zero. We can consider now an operator $g_X(A)$. Let us emphasize that it is new different kind of operator than above considered. Its spectral radius can be rewritten as follows
\begin{eqnarray*}
r(g_X(A)) &=&  \sum_{n=0}^{\infty} p_n r(A)^n = \sum_{n=0}^{\infty} p_n e^{n \ln r(A)}\\
&=& M_X(\ln r(A)).
\end{eqnarray*}

If the operator $A$ is a weighted composition operator then we obtain that the logarithm of the spectral radius of 
\begin{equation}
\label{gxA}
g_X(e^{\varphi}C_{\alpha})=\sum_{n=0}^{\infty} p_n (e^{\varphi}C_{\alpha})^n
\end{equation} 
is  a composition of cumulant and functional $\lambda$, i.e. 
$$
\ln r(g_X(e^{\varphi}C_{\alpha}))= (\ln M_X \circ \lambda)(\varphi).
$$ 
Define now a functional $\widehat{\lambda}_X$  by the following formula
\begin{equation}
\label{tworz}
\widehat{\lambda}_X(\varphi)= (\ln M_X \circ \lambda)(\varphi)
\end{equation}
for $\varphi\in\lambda^{-1}(\mathcal{D}(\ln M_X))$ and $+\infty$ otherwise.

Because the cumulant generating function is convex and lower semicontinuous on $\mathbb{R}$
then the following equality is satisfied
\begin{equation}
\label{vpCum}
(\ln M_X)(t) =  \sup_{a\in\mathbb{R}}\Big\{t a - (\ln M_X)^\ast(a)\Big\}.
\end{equation}
Observe that for the discrete random variable with values in the set of nonnegative integers  
$$
(\ln M_X)(t)=(\ln g_X\circ\exp)(t)=\ln\sum_{n=0}^\infty p_ne^{nt}
$$
and we can use once again Theorem 2.5 in  \cite{OZ3}. Taking  in Theorem 2.5 $a_n$ equals the probability $p_n$, assuming that $p_n>0$, we obtain the following formula
$$
(\ln M_X)^\ast(a)=\min_{(t_k)\in S_a} \liminf_{N\to \infty} \sum_{k=0}^{N} t_k\ln \frac{t_k}{p_k}\quad (a\in int\mathcal{D}((\ln M_X)^\ast)) 
$$
which is an example (for a discrete random variable) of the contraction principle (\ref{relE}). 

Substituting $t=\lambda(\varphi)$ into (\ref{vpCum})
and using the formula (\ref{vp}) we get
$$
\widehat{\lambda}_X(\varphi)=\sup_{a\in \mathcal{D}((\ln M_X)^\ast)}\sup_{\nu\in \mathcal{M}^1_\alpha}\Big\{\int_\mathcal{X}\varphi d(a\nu)-a\frac{\tau_\alpha(\nu)}{p}- (\ln M_X)^\ast(a)\Big\}.
$$
Defining now the set $\widehat{\mathcal{M}}_+=\{a\nu:\; \nu\in \mathcal{M}^1_\alpha\;{\textrm and}\;a\in \mathcal{D}((\ln M_X)^\ast)\}\setminus \{{\bf 0}\}$ and introducing it to the above formula
we get
$$
\widehat{\lambda}_X(\varphi)= \sup_{\widehat{\nu}\in \widehat{\mathcal{M}}_+}\Big\{  \int_\mathcal{X}\varphi d\widehat{\nu} - \frac{1}{p}\widehat{\nu}(\mathcal{X})\tau_\alpha\Big(\frac{\widehat{\nu}}{\widehat{\nu}(\mathcal{X})}\Big)- (\ln M_X)^\ast(\widehat{\nu}(\mathcal{X}))\Big\}.
$$
The expression
$$
\frac{1}{p}\widehat{\nu}(\mathcal{X})\tau_\alpha\Big(\frac{\widehat{\nu}}{\widehat{\nu}(\mathcal{X})}\Big)+ (\ln M_X)^\ast(\widehat{\nu}(\mathcal{X}))
$$
is convex and lower semicontinuous on $\widehat{\mathcal{M}}_+$ and
\begin{eqnarray*} 
\widehat{\lambda}_X^{\ast}({\bf 0}) & = & -\inf_{\varphi \in C(X)}(\ln M_X)(\lambda(\varphi))\\
\; & = & -\inf_{\varphi \in C(X)}\ln\sum_{n=0}^\infty p_n r(e^\varphi C_\alpha)^n = -\ln p_0. 
\end{eqnarray*}
In this way we obtained the following
\begin{pro}
\label{MPr}
For the functional $\widehat{\lambda}_X$ given by (\ref{tworz})  the following variational principle holds
$$
\widehat{\lambda}_X(\varphi)= \sup_{\widehat{\nu}\in \widehat{\mathcal{M}}}\Big\{  \int_\mathcal{X}\varphi d\widehat{\nu} - \widehat{\lambda}_X^{\ast}(\widehat{\nu})\Big\},
$$
where $  \widehat{\mathcal{M}}=\{\widehat{\nu}=a \nu:\ \nu \in \mathcal{M}^1_{\alpha}\ {\textrm and} \ a\in \mathcal{D}((\ln M_X)^{\ast})  \}$ and
$$
\widehat{\lambda}_X^{\ast}(\widehat{\nu})=\frac{1}{p}\widehat{\nu}(\mathcal{X})\tau_\alpha\Big(\frac{\widehat{\nu}}{\widehat{\nu}(\mathcal{X})}\Big)+ (\ln M_X)^\ast(\widehat{\nu}(\mathcal{X})) \ \ {\textrm for} \ \ \widehat{\nu}(\mathcal{X})>0.
$$ 
If $\widehat{\nu}(\mathcal{X})=0$ then $\widehat{\lambda}^{\ast}_X({\bf 0})=-\ln p_0$. 
\end{pro}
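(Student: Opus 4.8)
The plan is to transcribe, nearly verbatim, the argument already used for Theorem~\ref{MTh}, with the composite function $\ln M_X\circ\exp$ replaced throughout by $\ln M_X$ itself; this is legitimate because for a random variable $X$ taking values in ${\Bbb N}\cup\{0\}$ the cumulant generating function already has the built-in form $\ln M_X(t)=\ln\sum_{n=0}^\infty p_ne^{nt}$, so there is no outer exponential to peel off. The first step is to note that $\ln M_X$ is convex, increasing and lower semicontinuous on ${\Bbb R}$, so that involutivity of the Legendre--Fenchel transform gives the representation (\ref{vpCum}); since $\ln M_X$ is increasing, $\mathcal{D}((\ln M_X)^\ast)\subset[0,+\infty)$, and hence the supremum over $a$ in (\ref{vpCum}) may be restricted to this effective domain.

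Next I would substitute $t=\lambda(\varphi)$ into (\ref{vpCum}) and insert the variational principle (\ref{vp}) for $\lambda$. Because every admissible $a$ is nonnegative, $a\lambda(\varphi)=a\max_{\nu\in\mathcal{M}^1_\alpha}\{\int_\mathcal{X}\varphi\,d\nu-\tau_\alpha(\nu)/p\}$ equals $\max_{\nu}\{a\int_\mathcal{X}\varphi\,d\nu-a\tau_\alpha(\nu)/p\}$, which lets me interchange the two suprema and reparametrise by $\widehat{\nu}=a\nu$: then $\widehat{\nu}(\mathcal{X})=a$ and $\nu=\widehat{\nu}/\widehat{\nu}(\mathcal{X})$ for $a>0$, while $a=0$ contributes only the point $\widehat{\nu}={\bf 0}$, where the relevant value is $(\ln M_X)^\ast(0)=-\ln p_0$. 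This produces the asserted representation of $\widehat{\lambda}_X(\varphi)$ as a supremum over $\widehat{\mathcal{M}}=\widehat{\mathcal{M}}_+\cup\{{\bf 0}\}$, which is visibly a Legendre--Fenchel transform, so $\widehat{\lambda}_X$ is at once convex and lower semicontinuous on $C(\mathcal{X})$.

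It then remains to check that the functional $G(\widehat{\nu})=\frac1p\widehat{\nu}(\mathcal{X})\tau_\alpha(\widehat{\nu}/\widehat{\nu}(\mathcal{X}))+(\ln M_X)^\ast(\widehat{\nu}(\mathcal{X}))$, with $G({\bf 0})=(\ln M_X)^\ast(0)=-\ln p_0$, is convex and lower semicontinuous on $\widehat{\mathcal{M}}$; once this is done, extending $G$ by $+\infty$ off $\widehat{\mathcal{M}}$ exhibits $\widehat{\lambda}_X$ as $G^\ast$, so by the biconjugate theorem $\widehat{\lambda}_X^\ast=G$ and in particular $\mathcal{D}(\widehat{\lambda}_X^\ast)\subset\widehat{\mathcal{M}}$. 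On $\widehat{\mathcal{M}}_+$ the required convexity and lower semicontinuity is exactly the computation carried out in the proof of Theorem~\ref{MTh} (continuity of $\widehat{\nu}\mapsto\widehat{\nu}(\mathcal{X})$ and of $\widehat{\nu}\mapsto\widehat{\nu}/\widehat{\nu}(\mathcal{X})$, lower semicontinuity of $\tau_\alpha$ and of $(\ln M_X)^\ast$, the perspective-function estimate for the first summand, and additivity plus positive homogeneity of the total variation together with convexity of $(\ln M_X)^\ast$ for the second). At ${\bf 0}$ one uses $\tau_\alpha\ge0$ together with convexity and lower semicontinuity of $(\ln M_X)^\ast$ to obtain both the convexity inequality through ${\bf 0}$ and $\liminf_{\widehat{\nu}\to{\bf 0}}G(\widehat{\nu})\ge(\ln M_X)^\ast(0)=G({\bf 0})$. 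Finally, as a cross-check I would compute $\widehat{\lambda}_X^\ast({\bf 0})$ directly: $\widehat{\lambda}_X^\ast({\bf 0})=-\inf_{\varphi}(\ln M_X)(\lambda(\varphi))$, and since $r(e^\varphi C_\alpha)$ can be made an arbitrarily small positive number (as noted in the proof of Theorem~\ref{MTh}) while $\ln M_X$ is increasing, this infimum equals $\lim_{t\to-\infty}\ln\sum_{n=0}^\infty p_ne^{nt}=\ln p_0$, confirming $\widehat{\lambda}_X^\ast({\bf 0})=-\ln p_0$.

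The step needing the most care is the boundary point ${\bf 0}$: one must check that enlarging the domain of the supremum from $\widehat{\mathcal{M}}_+$ to $\widehat{\mathcal{M}}$ is harmless (equivalently $\widehat{\lambda}_X(\varphi)=\ln M_X(\lambda(\varphi))\ge\ln p_0$, which holds since $M_X(\lambda(\varphi))=\sum_{n\ge0}p_ne^{n\lambda(\varphi)}\ge p_0$), that the reparametrisation $\widehat{\nu}=a\nu$ is a weak-$\ast$ homeomorphism onto $\widehat{\mathcal{M}}_+$ so that lower semicontinuity transports correctly, and that $G$ stays convex and lower semicontinuous up to ${\bf 0}$. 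Beyond this the proof is a direct transcription of the arguments already given for $\widetilde{\lambda}_X$.
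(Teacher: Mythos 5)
Your proposal is correct and follows essentially the same route as the paper: substitute $t=\lambda(\varphi)$ into the biconjugate representation (\ref{vpCum}) of $\ln M_X$, insert the variational principle (\ref{vp}), reparametrise by $\widehat{\nu}=a\nu$, verify convexity and lower semicontinuity of the candidate conjugate exactly as in Theorem \ref{MTh}, and evaluate $\widehat{\lambda}_X^{\ast}({\bf 0})$ directly via $-\inf_\varphi\ln\sum_n p_n r(e^\varphi C_\alpha)^n=-\ln p_0$. Your additional checks at the boundary point ${\bf 0}$ (that $(\ln M_X)^\ast(0)=-\ln p_0$ makes the formula consistent there, and that enlarging the supremum from $\widehat{\mathcal{M}}_+$ to $\widehat{\mathcal{M}}$ is harmless because $\widehat{\lambda}_X(\varphi)\ge\ln p_0$) are refinements the paper leaves implicit, not a different argument.
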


\begin{rem}
Let us stress once again that Theorem \ref{MTh} and Proposition \ref{MPr} are dealt with two different classes of operators. 
In the first one we consider operators
that can be symbolically written as $\int_0^\infty e^{sA}\mu_X(ds)$, where $A=e^\varphi C_\alpha$ and the integral is understood  in the sens of 
the power series (\ref{MxA}). In the second one we investigate the spectral exponent of operators of the form $\int_0^\infty A^s\mu_X(ds)$, where this integral is defined by the series (\ref{gxA}). 
Therefore Proposition \ref{MPr} is not a simple subcase of Theorem \ref{MTh}. For a discrete random variables with values in $\mathbb{N}\cup \{0\}$ we can defined a new type of considered operators. 
\end{rem}

\begin{exa}
For Poisson distributed  $X$ with parameter $\mu$ the probability-generating function is of the form 
$$
g_X(s)=e^{\mu(s-1)}.
$$
Then the operator $g_X(A)=e^{-\mu}e^{\mu A}$ and 
\begin{equation}\label{loggXA}
\ln r(g_X(A))= \ln (e^{-\mu}e^{\mu r(A)})= \mu r(A) -\mu= \ln M_X(\ln r(A)). 
\end{equation}
The cumulant generating function of $X$ is equal to $\ln M_X(t)= \mu  e^t - \mu$ and its Cram\'er transform  
has the form 
\begin{equation*}
(\ln M_X )^{\ast}(a)=\left\{ 
\begin{array}{ll}
\mu - a + a \ln\frac{a}{\mu} & a>0, \\
\mu & a=0,\\
+\infty & a<0.
\end{array} \right.
\end{equation*} 

Taking $A=e^{\varphi}C_{\alpha}$ in (\ref{loggXA}) by Proposition \ref{MPr} for $\widetilde{\nu}(\mathcal{X})>0$ we obtain that
$$
\widehat{\lambda}_X^{\ast}(\widehat{\nu})=\frac{1}{p}\widehat{\nu}(\mathcal{X})\tau_\alpha\Big(\frac{\widehat{\nu}}{\widehat{\nu}(\mathcal{X})}\Big)+ \mu - \widehat{\nu}(\mathcal{X}) + \widehat{\nu}(\mathcal{X}) \ln\frac{\widehat{\nu}(\mathcal{X})}{\mu}.
$$ 
If we consider now the operator $M_X(A)= e^{-\mu} e^{\mu e^A}$ then for $A=e^{\varphi}C_{\alpha}$ by  Theorem \ref{MTh} for $\widetilde{\nu}(\mathcal{X})>0$
we get 
$$
\widetilde{\lambda}_X^{\ast}(\widetilde{\nu})=\frac{1}{p}\widetilde{\nu}(\mathcal{X})\tau_\alpha\Big(\frac{\widetilde{\nu}}{\widetilde{\nu}(\mathcal{X})}\Big)+ (\ln M_X\circ\exp)^\ast(\widetilde{\nu}(\mathcal{X})),
$$
where
$$
(\ln M_X\circ\exp)^\ast(\widetilde{\nu}(\mathcal{X}))= \min_{\alpha> 0} \Big\{ \mu - \alpha + \alpha\ln \frac{\alpha}{\mu} - \widetilde{\nu}(\mathcal{X}) \ln \alpha \Big\} + (\exp)^{\ast} (\widetilde{\nu}(\mathcal{X})).
$$
\end{exa}

{\bf Acknowledgment.} We would like to thank the reviewer for his very precise improvements and inquiring remarks and comments.


\end{document}